\documentclass[conference,final]{IEEEtran}
\IEEEoverridecommandlockouts
\usepackage{cite}
\usepackage{amsmath,amssymb,amsfonts, mathtools, amsthm}
\usepackage[dvipsnames]{xcolor}
\usepackage[
    colorlinks=true,
    linkcolor=blue,
    citecolor=blue,
    urlcolor=blue
]{hyperref}
\usepackage[nameinlink]{cleveref}
\usepackage{graphicx}
\usepackage{textcomp}

\usepackage{comment}

\usepackage[T1]{fontenc}
\usepackage[utf8]{inputenc}
\usepackage[english]{babel}

\newcommand{\minimize}{\textrm{minimize}}

\newcommand{\R}{\mathbb{R}}

\newcommand{\calO}{\mathcal{O}}

\definecolor{purple}{rgb}{0.74, 0.2, 0.64}

\newcommand{\D}{\mathrm{D}}

\newcommand{\transpose}{^\top}

\newcommand{\St}{\mathrm{St}}
\newcommand{\grad}{\mathrm{grad}}

\newcommand{\T}{^\top}

\newcommand{\rmT}{\mathrm{T}}

\newcommand{\rmN}{\mathrm{N}}
\newcommand{\Sym}{\mathrm{Sym}}

\renewcommand{\skew}{\operatorname{skew}}

\newcommand{\sigmamin}{\sigma_\mathrm{min}}

\newcommand{\Rnp}{{\mathbb{R}^{n \times p}}}
\newcommand{\Rnn}{{\mathbb{R}^{n \times n}}}

\usepackage{bm}

\newcommand{\range}{\operatorname{range}}

\usepackage{algorithm}
\usepackage{algpseudocode}
\newcommand{\inv}{^{-1}}

\newcommand{\aref}[1]{\hyperref[#1]{A\ref{#1}}}
\newcommand{\norm}[1]{\left\|#1\right\|}

\newcommand{\fronorm}[1]{\left\|#1\right\|_\mathrm{F}}

\newtheorem{theorem}			     {Theorem}	[section]
	
\newtheorem{lemma}	      [theorem]  {Lemma}		

\numberwithin{algorithm}{section}
\numberwithin{equation}{section}

\crefname{algorithm}{Algorithm}{Algorithms}
\crefname{lemma}{Lemma}{Lemmas}
\crefname{theorem}{Theorem}{Theorems}
\crefname{proposition}{Proposition}{Propositions}
\crefname{section}{Section}{Sections}
\crefname{figure}{Figure}{Figures}

\newcommand{\zetaprime}{{\zeta'}}

\newcommand{\StBnp}{\mathrm{St}^{B}(n,p)}

\newcommand{\StXBnp}{\mathrm{St}_{X}^{B}(n,p)}

\newcommand{\Rnk}{\mathbb{R}^{n\times k}}
\newcommand{\Rnpstar}{\mathbb{R}^{n\times p}_{*}}

\newcommand{\dNsketch}{d_{\mathrm{N}}^{\mathrm{sketch}}}
\newcommand{\dTsketch}{d_{\mathrm{T}}^{\mathrm{sketch}}}

\newcommand{\bbE}{\mathbb{E}}
\newcommand{\Stnp}{\St(n,p)}

\usepackage{tcolorbox}
\newcommand{\revision}{}

\begin{document}

\title{The sketched landing method for large-scale \\optimization under orthogonality constraints
\thanks{This work was supported by the Fonds de la Recherche Scientifique - FNRS under Grant no.~T.0001.23. Simon Mataigne is a Research Fellow of the Fonds de la Recherche Scientifique - FNRS.}
}

\author{\IEEEauthorblockN{Florentin Goyens}
\IEEEauthorblockA{\textit{ICTEAM Institute} \\
\textit{UCLouvain}\\
Louvain-la-Neuve, Belgium\\
florentin.goyens@uclouvain.be}
\and
\IEEEauthorblockN{Simon Mataigne}
\IEEEauthorblockA{\textit{ICTEAM Institute} \\
\textit{UCLouvain}\\
Louvain-la-Neuve, Belgium\\
simon.mataigne@uclouvain.be}
\and
\IEEEauthorblockN{P.-A. Absil}
\IEEEauthorblockA{\textit{ICTEAM Institute} \\
\textit{UCLouvain}\\
Louvain-la-Neuve, Belgium\\
pa.absil@uclouvain.be}
}

\maketitle

\begin{abstract}
 We propose the \emph{sketched landing method}, a randomized variant of the landing method for optimization under orthogonality constraints. Each landing step consists of the sum of a \emph{normal} component, which reduces infeasibility, and a \emph{tangent} component, which decreases the objective function. Our main contribution is the introduction of low-dimensional random \emph{sketch matrices} to reduce the computational cost of these directions. We consider both dense (Gaussian) and sparse (subsampling) sketch matrices, and show how they reduce the per-iteration cost while preserving convergence guarantees in expectation.
\end{abstract}

\begin{IEEEkeywords}
Constrained optimization, Stiefel manifold, landing method, sketching.
\end{IEEEkeywords}

\section{Introduction}
For $n\geq p>0$, we consider the constrained minimization of a population risk,
\begin{equation}\label{eq:P}\tag{P}
\begin{aligned}
& \underset{X\in \R^{n\times p}}{\minimize} 
& & f(X)\coloneqq \mathbb{E}_{\xi\sim \mathcal{D}}[f_\xi(X)] \\
& \text{subject to}
&& X\T X = I_p,
\end{aligned}
\end{equation}
where $\xi \sim \mathcal{D}$ is a sample drawn from a distribution~$\mathcal{D}$, and each $f_\xi\colon \Rnp \to \R$ is a (possibly nonconvex) smooth loss with $L$-Lipchitz continuous gradient (i.e., $L$-smooth). 
The feasible set, called the (standard) Stiefel manifold, is the set of orthonormal $p$-frames in~$\mathbb{R}^n$,
\[
\St(n,p)\coloneqq\{ X\in\R^{n\times p }\,|\, X\T X= I_p \}.
\]   
Letting
\begin{align}\label{eq:h}
h\colon \Rnp \to \Sym(p)\colon X\mapsto X\transpose X - I_p,
\end{align}
the feasible set is $h\inv(0)$ and the infeasibility is measured by
\begin{align}\label{eq:psi}
	\mathcal{N}(X) \coloneqq \frac12 \fronorm{h(X)}^2.
\end{align}

Riemannian optimization methods 
maintain orthogonality using, e.g., a QR decomposition of an $n\times p$ matrix at every iteration~\cite{absil2008}. 
Landing methods provide an alternative to Riemannian optimization by allowing iterates to evolve away from the feasible set while still converging to it~\cite{vary2024Optimizationa,pmlr-v151-ablin22a, GaoVaryAblinAbsil2022, JMLR:v25:23-0451, pmlr-v195-schechtman23a,SiMalick26}. These methods exploit the local geometry of the constraint function~$h$: each update direction combines a normal component that reduces infeasibility with a tangent component that decreases the objective function~$f$. Recent work has highlighted close connections between the landing framework and sequential quadratic programming~\cite{goyens2026riemannian}. In particular, landing methods are based on first-order directions, which are commonly used in machine learning applications, while avoiding the delicate tuning of penalty parameters required by many traditional penalty approaches.

Yet, landing methods need matrix products computed in $\calO(np^2)$ that may greatly outweigh the cost of computing a stochastic gradient through backpropagation in modern deep learning applications. Therefore, we propose a stochastic version of the landing method, which uses low-dimensional random sketches to drastically reduce the size of the matrix products that form the landing iteration. Our method builds on an existing algorithm for optimization on the generalized Stiefel manifold with stochastic constraints~\cite{vary2024Optimizationa}, for which we design low-rank sketches to approximate the identity matrix. 

Other randomized optimization algorithms on the Stiefel manifold have been proposed, for example in \cite{FeiFengFan2025} and \cite{HanPoirionTakeda2025}. In \cite{FeiFengFan2025}, the randomization is performed over the columns, and in both works the iterates remain on the manifold, in contrast with the landing framework.

\cref{sec:preliminaries} starts by introducing the landing framework on the generalized Stiefel manifold. Then, \cref{sec:sketched-landing} introduces the sketched landing algorithm on the standard Stiefel manifold. Two sketching strategies are given in \cref{sec:sketches}. A variance-reduction modification of the algorithm is proposed in \cref{sec:svrg} based on the SVRG framework~\cite{johnson2013accelerating}. Finally, \cref{sec:convergence} establishes a convergence result and \cref{sec:numerics} presents numerical experiments.
\section{Preliminaries: geometry and the landing framework}
\label{sec:preliminaries}
Presenting the sketched landing method requires introducing optimization over the \emph{generalized} Stiefel manifold. Given a symmetric positive definite matrix $B\in \Rnn$, it is defined as
\[
\St^B(n,p) \coloneqq \{X\in \Rnp:\; X^\top B X = I_p\}.
\]
The corresponding constraint function is
\begin{equation*}
    h^B(X) \coloneqq X\T B X - I_p,
\end{equation*}
and the infeasibility measure is
\begin{equation*}
    \mathcal{N}^B(X) \coloneqq \dfrac{1}{2}\fronorm{h^B(X)}^2.
\end{equation*}
In this paper, we eventually restrict ourselves to the particular case $B=I_n$. 

We assume throughout that all iterates belong to the set $\Rnpstar$ of $n\times p$ full-rank matrices. 
For every $X\in \Rnpstar$, consider the set
\begin{equation*}
    \StXBnp = \left\{Y\in \Rnpstar:X\T B X = Y\T B Y \right\},
\end{equation*}
which denotes the level set of the function $h^B$ corresponding to the point $X$. 

The landing method relies on the observation that for every $X\in \Rnpstar$, the set $\StXBnp$ is a smooth manifold with the same dimension as $\StBnp$~\cite{goyens2026riemannian}. The tangent and normal space to $\StXBnp$ at $X\in \Rnpstar$ are given by
\begin{equation*}
    \begin{aligned}
        \rmT_X\StXBnp &= \ker \D h^B(X)\\
        \rmN_X \StXBnp &= \range \D h^B(X)^*.
    \end{aligned}
\end{equation*}
See~\cite[Section 7]{goyens2026riemannian} for explicit characterizations of the tangent and normal spaces.

\subsection{Landing method on $\St^B(n,p)$}

\label{sec:landing-baseline}
Like many optimization methods, the landing method is a general framework rather than a specific algorithm. For $t=0,1,2...$, it takes the form 
\begin{equation}\label{eq:landing}
    X_{t+1} = X_t + \alpha_t \left(\omega_{\mathrm{T}} d_{\mathrm{T}}(X_t) + \omega_{\mathrm{N}} d_{\mathrm{N}}(X_t)\right),
\end{equation}
where $\omega_{\mathrm{T}},\omega_{\mathrm{N}} \geq 0$ are parameters, $\alpha_t > 0$ is an adaptive step size, $d_{\mathrm{T}}(X_t)\in \rmT_{X_t} \St_{X_t}^B(n,p)$ is a \emph{tangent descent component} for $f$, and $d_{\mathrm{N}}(X_t) \in \rmN_X \St_{X_t}^B(n,p)$ is a \emph{normal component} reducing the infeasibility measure. Dropping the index $t$, popular choices for $d_{\mathrm{T}}(X)$ and $d_{\mathrm{N}}(X)$ are
\begin{equation}\label{eq:landing_generalized_stiefel}
\begin{aligned}
    d_{\mathrm{T}}(X) &= - \grad^B f(X), \\
   \text{and}\ d_{\mathrm{N}}(X) &= - \nabla \mathcal{N}^B(X),
\end{aligned}
\end{equation}
where $\grad^B f(X)$ is the Riemannian gradient on $\StBnp$ of $f$ at $X$. It is a descent direction for $f$ in the tangent space to the current infeasibility level set, and it is given by
\begin{align*}
	\grad^B f(X) &= 2\skew\left( \nabla f(X) X\T B \right) B X,
\end{align*}
where $\skew(A) \coloneqq (A-A\T)/2$. This constrained gradient corresponds to an extension of the canonical metric, see~\cite{vary2024Optimizationa}.

The normal component $d_{\mathrm{N}}(X)$ is the unconstrained gradient of the infeasibility measure (in the Euclidean metric), given by
\begin{equation*}
\nabla \mathcal{N}^B(X) = 2BX(X\T BX - I_p).
\end{equation*}

For optimization on $\Stnp$, the terms in~\eqref{eq:landing_generalized_stiefel} reduce to 
\begin{equation}\label{eq:landing_stiefel}
\begin{aligned}
	\grad f(X) &= 2\skew\left( \nabla f(X) X\T \right)X,  \\
 \nabla \mathcal{N}(X) &= 2X(X^\top X-I_p). 
\end{aligned}
\end{equation}
Hence we define 
\begin{equation}\label{eq:unsketched_landing}
\begin{aligned}
    d_\mathrm{T}^{\mathrm{landing}}(X) &= - \grad f(X),\\  d_\mathrm{N}^{\mathrm{landing}}(X) &= - \nabla \mathcal{N}(X).
    \end{aligned}
\end{equation}

\subsection{Stochastic landing method on $\St^B(n,p)$}
In~\cite{vary2024Optimizationa}, the authors introduce a stochastic landing method for the problem 
\[\min_{X} f(X) \ \text{s.~t.}\ X\in \StBnp,\]
where the matrix $B\in \Rnn$ is unavailable but unbiased random estimates $B_\zeta \in \Rnn$ can be obtained, i.e., $\mathbb{E}_\zeta [B_\zeta]=B$.
Consider two independent estimates $B_\zeta$ and $B_{\zetaprime}$ of $B$, as well as an unbiased stochastic estimate $\nabla f_\xi(X)$ of the gradient $\nabla f(X)$. The update step reads
\begin{align}
    \nonumber
	X_{t+1} = X_t - \alpha_t \big(&\omega_\mathrm{T}\grad^{B,\zeta,\zeta'} f_\xi(X_t)\\
 \label{eq:landing_vary}
 &+ \omega_\mathrm{N} \nabla\mathcal{N}^{B,\zeta,\zeta'}(X_t)\big),
\end{align}
where 
\begin{equation*}
 \nabla\mathcal{N}^{B,\zeta,\zeta'}(X) = 2 B_{\zeta'} X(X\transpose B_\zeta X - I_p),
\end{equation*}
and 
\begin{equation*}
	\grad^{B,\zeta,\zeta'} f_\xi(X) = 2 \skew(\nabla f_\xi(X)X\transpose B_\zeta)B_{\zeta'}X.
\end{equation*}

\section{Sketched landing on $\Stnp$}
\label{sec:sketched-landing}
This section shows that the framework of stochastic optimization on the generalized Stiefel manifold can be used to design a randomized landing algorithm for optimization on the standard Stiefel manifold. The matrix $B=I_n$ is estimated using unbiased low-rank \emph{sketch matrices}. This allows reducing the dimensions of the matrix products that appear in~\eqref{eq:landing_stiefel}.
Indeed, consider random matrices of the form
\begin{equation}\label{eq:sketch}
B_\zeta = S_\zeta S_\zeta\transpose,\qquad S_\zeta\in \R^{n\times k},\qquad k\ll p\leq n,
\end{equation}
where the sampling distribution is unbiased:
\begin{equation}\label{eq:unbiased}
\mathbb{E}_\zeta[B_\zeta]=\mathbb{E}_\zeta[S_\zeta S_\zeta^\top]=I_n.
\end{equation}
The size $k$ is the \emph{sketch dimension}. In~\cref{sec:sketches}, we propose two options for generating the sketch matrices~$S_\zeta$.

\subsection{Sketched normal and tangent components}
\label{sec:sketched-directions}
Consider two independent sketches $S_{\zeta}$ and $S_{\zeta'}$ sampled from the same unbiased distribution. The sketched directions are defined  using~\eqref{eq:sketch} in the landing directions for the generalized Stiefel manifold~\eqref{eq:landing_vary}. The sketched tangent component is
\begin{align*}
\dTsketch(X) & =- \grad^{B,\zeta,\zeta'}  f_\xi(X)\\
&=-2 \skew\left(\nabla f_\xi(X)X\transpose B_\zeta\right)B_{\zeta'}X \\
&= -2 \skew \left(\nabla f_\xi(X) X\T S_\zeta S_\zeta\T\right) S_{\zetaprime} 
	S_{\zetaprime}\T X.
 \end{align*}
By grouping the operations to avoid building an $n\times n$ matrix, we obtain
 \begin{align}
\nonumber
&\dTsketch(X) = -\left[\nabla f_\xi(X)\left( \left( S_\zeta\T X\right)\T (S_\zeta\T S_{\zetaprime})\right)\right] \left(S_{\zetaprime}\T X\right)\\
 \label{eq:sketch-tangent}
&\quad\ + S_\zeta \left[ \left(\left(S_\zeta\T X\right) \left(\nabla f_\xi(X)\T S_{\zetaprime}\right)\right) 
\left(S_{\zetaprime}\T X\right)\right].
 \end{align}
In order to obtain the desired speed-up in terms of computational complexity, it is crucial to compute the matrix products as proposed by the parentheses. Indeed, associating the matrix products in the wrong order may yield a very suboptimal operation count. This is explained in more details in \cref{lem:count_dense,lem:count_sparse}. 

The sketched normal component is
\begin{align}
\nonumber
d_{\mathrm{N}}^{\mathrm{sketch}}(X) &=  -\nabla\mathcal{N}^{B,\zeta,\zeta'}(X) \\
\nonumber
&= -2 B_{\zeta'} X\left(X\transpose B_\zeta X - I_p \right)	\\
\nonumber
 &= -2 S_\zetaprime S_\zetaprime\T X\left(X\transpose S_\zeta S_\zeta\T X - I_p \right)\\
 \nonumber
 &=-2 S_\zetaprime \left[\left((S_\zetaprime\T X)  (S_\zeta\T X)\T\right) (S_\zeta^\top X)\right] \\
 \label{eq:sketch-normal}
 &\quad + 2S_\zetaprime (S_\zetaprime\T X).
 \end{align}

In addition to the cost of computing the tangent and normal components, one should take into account the cost of evaluating the gradient $\nabla f(X)$ or the stochastic estimate $\nabla f_\xi(X)$. In particular, for problems where evaluating the gradient requires $\calO(n p^2)$ operations or more, dense sketch matrices may not yield any computational improvement. However, if the sketch matrix is sparse, the sparsity pattern should be taken into account to reduce the cost of evaluating  $\nabla f(X)$ or $\nabla f_\xi(X)$. Additional details about computational complexity can be found at \url{https://github.com/flgoyens/SketchedLanding}.

\subsection{The sketched landing algorithm}
\label{sec:alg-sketch-landing}

A pseudo-code of the sketched landing algorithm is given in~\cref{algo:sketchlanding}.

\begin{algorithm}[h]
\caption{Sketched landing}
\label{algo:sketchlanding}
\begin{algorithmic}[1]
\Require Initial $X_0\in \Rnpstar$,  step sizes $\{\alpha_t >0\}_{t=0}^T$, penalty weights $\omega_\mathrm{N},\omega_\mathrm{T}>0$ and sketch size $k$.
\For{$t=0,1,\dots, T$}
\State Sample sketches $S_{\zeta_t},S_{\zeta'_t}\in \R^{n\times k}$. 
\State (Optional) Sample $\xi_t$ and compute $\nabla f_{\xi_t}(X_t)$.
\State Compute $\dTsketch(X_t)$ and $\dNsketch(X_t)$. 
\State $d^{\mathrm{sketch}}(X_t) = \omega_{\mathrm{T}} \dTsketch(X_t) + \omega_{\mathrm{N}} \dNsketch(X_t) $.
\State Set $X_{t+1} = X_t + \alpha_t d^{\mathrm{sketch}}(X_t) $.
\EndFor
\end{algorithmic}
\end{algorithm}

\section{Types of sketches}
\label{sec:sketches}
This section proposes two different unbiased distributions for sampling the sketch matrices of \cref{algo:sketchlanding}. In the subsequent flop counts, the availability of $\nabla f_\xi(X)$ is always assumed.
\subsection{Dense sketches}\label{sec:dense}
Let us first show in~\cref{prop:expectation_sketch} that scaled Haar-distributed matrices on $\St(n,k)$~\cite{Stewart80} (i.e., uniformly distributed) are suitable for sampling unbiased estimates of $I_n$ as in~\eqref{eq:sketch}.

\begin{lemma}\label{prop:expectation_sketch}
 Let $S_\zeta = \sqrt{\frac{n}{k}} R_\zeta$, where $R_\zeta$ is Haar distributed on $\St(n,k)$. Then, 
 \begin{align}
 	\bbE_\zeta\left[S_\zeta  S_\zeta\T \right] &= I_n.
 \end{align}
 \end{lemma}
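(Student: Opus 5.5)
The plan is to exploit the orthogonal invariance of the Haar measure on $\St(n,k)$ and then pin down the scalar by taking a trace. Set $M \coloneqq \bbE_\zeta[R_\zeta R_\zeta\T] \in \Rnn$. This expectation is finite because every entry of $R_\zeta R_\zeta\T$ has modulus at most $1$ (the columns of $R_\zeta$ are unit vectors). By definition, the Haar (uniform) distribution on $\St(n,k)$ is invariant under left multiplication by orthogonal matrices: for every $Q\in \calO(n)$, $QR_\zeta$ has the same distribution as $R_\zeta$. Therefore
\begin{equation*}
    QMQ\T = \bbE_\zeta\left[(QR_\zeta)(QR_\zeta)\T\right] = M \qquad \text{for all } Q\in\calO(n).
\end{equation*}

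Next, deduce that $M$ is a multiple of the identity. The matrix $M$ is symmetric, so it has an orthonormal eigenbasis. Suppose it had two distinct eigenvalues with unit eigenvectors $u$ and $v$. Choose $Q\in\calO(n)$ with $Qu=v$; for instance, take $Q$ to be the identity if $u=v$, and otherwise the Householder reflection that maps $u$ to $v$. Then $MQu = QMu$ by commuting, which gives $Mv=\lambda_u v$, contradicting $Mv=\lambda_v v$ with $\lambda_u\neq\lambda_v$. Alternatively, one can argue directly: commuting with all orthogonal matrices forces $M=cI_n$, using diagonal sign matrices to kill the off-diagonal entries and permutation matrices to equalize the diagonal entries. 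Both routes are elementary, and I would present the sign and permutation argument because it is the most concrete.

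Finally, compute $c$ by taking the trace. Since $\operatorname{tr}(R_\zeta R_\zeta\T)=\operatorname{tr}(R_\zeta\T R_\zeta)=\operatorname{tr}(I_k)=k$ almost surely, linearity of trace and expectation gives $nc=\operatorname{tr}M=k$, so $c=k/n$. Hence
\begin{equation*}
    \bbE_\zeta[S_\zeta S_\zeta\T]=\frac{n}{k}\,M=I_n.
\end{equation*}

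The only step needing care is the invariance property. I would state it explicitly as the defining property of the Haar measure on $\St(n,k)$, as in~\cite{Stewart80}; equivalently, it follows from realizing $R_\zeta$ as the first $k$ columns of a Haar orthogonal matrix, or as the Q factor of an $n\times k$ Gaussian matrix, whose law is invariant under left orthogonal multiplication. The rest of the argument is routine.
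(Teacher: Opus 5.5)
Your proof is correct and follows the paper's argument. You use left-orthogonal invariance of the Haar measure to get $QMQ^\top = M$, conclude $M = \kappa I_n$, and fix $\kappa = k/n$ from $\operatorname{tr}(R_\zeta R_\zeta^\top) = k$; the differences are that you justify why commuting with all orthogonal matrices forces a scalar matrix (the paper only asserts it), and you invoke invariance under the full group $\mathrm{O}(n)$ rather than $\mathrm{SO}(n)$, which is valid and is what your sign-matrix and Householder arguments require.
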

 \begin{proof}
 By definition, if $R_\zeta$ is Haar-distributed, for all $Q\in\mathrm{SO}(n)$, then $R_\zeta \sim QR_\zeta$~\cite{Stewart80}. In particular, this yields
 \begin{align}
 \nonumber
 \bbE_\zeta\left[R_\zeta R_\zeta^\top \right] &= \bbE_\zeta\left[QR_\zeta R_\zeta^\top Q^\top\right]\\
 \label{eq:variance}
 &= Q\bbE_\zeta\left[R_\zeta R_\zeta^\top\right]Q^\top.
 \end{align}
 Since \eqref{eq:variance} holds for all $Q\in\mathrm{SO}(n)$, it implies that $$\bbE_\zeta[R_\zeta R_\zeta^\top] = \kappa I_n,$$ for some $\kappa\in\mathbb{R}$. Moreover, since $\mathrm{Tr}(R_\zeta R_\zeta^\top)=k$, we have $\kappa= \frac{k}{n}$. By defining $S_\zeta = \sqrt{\frac{n}{k}} R_\zeta$, the claim follows.
 \end{proof}
A method for generating sketch matrices satisfying \cref{prop:expectation_sketch} is to orthogonalize and scale matrices with normally distributed entries. It takes the simple form
$$\text{(Dense sketch)} \quad S_\zeta = \sqrt{\frac{n}{k}}\mathtt{qf}\left(\mathtt{randn}(n,k)\right),$$ 
where $\mathtt{qf}$ denotes the orthogonal factor of a (thin) QR decomposition where the upper triangular factor has nonnegative diagonal entries. Sampling a sketch requires $\calO(nk^2)$ flops in this case.

Moreover in this setting, computing the product $S_\zeta^\top X$ demands $\calO(npk)$ flops. It is verified in \cref{lem:count_dense} that computing $\dTsketch(X)$ and $\dNsketch(X)$ with the suggested association of matrix products reduces to $\calO(npk)$ flops instead of $\calO(np^2)$ flops for the unsketched landing algorithm from \cite{vary2024Optimizationa}.
\begin{lemma}\label{lem:count_dense}
    Computing $d^{\mathrm{sketch}}(X)$ in \cref{algo:sketchlanding} using dense sketches requires $\calO(npk)$ flops.
\end{lemma}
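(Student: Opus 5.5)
The plan is a flop count: go through the parenthesized expressions \eqref{eq:sketch-tangent} and \eqref{eq:sketch-normal} in the order they are written, bound the cost of each elementary matrix product, and add up. The only rule needed is that multiplying an $a\times b$ matrix by a $b\times c$ matrix costs $\calO(abc)$ flops, and that additions and scalings of $n\times p$ matrices cost $\calO(np)$. We take $\nabla f_\xi(X)$ as given, as stated at the beginning of \cref{sec:sketches}. We use $k\le p\le n$ throughout. Sampling the two dense sketches costs $\calO(nk^2)\subseteq\calO(npk)$, as noted after \cref{prop:expectation_sketch}, so sampling does not change the final bound.

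\emph{Shared small factors.} We first form the following reusable matrices:
\begin{itemize}
\item $S_\zeta\T X$ and $S_{\zetaprime}\T X$, both $k\times p$, each costing $\calO(npk)$ because $S_\zeta$ and $S_{\zetaprime}$ are dense;
\item $S_\zeta\T S_{\zetaprime}$, of size $k\times k$, costing $\calO(nk^2)$;
\item $\nabla f_\xi(X)\T S_{\zetaprime}$, of size $p\times k$, costing $\calO(npk)$.
\end{itemize}

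\emph{Tangent component.} For the first term of \eqref{eq:sketch-tangent}:
\begin{itemize}
\item $(S_\zeta\T X)\T(S_\zeta\T S_{\zetaprime})$ is $p\times k$ and costs $\calO(pk^2)$;
\item multiplying on the left by $\nabla f_\xi(X)$ gives an $n\times k$ matrix at cost $\calO(npk)$;
\item multiplying on the right by $S_{\zetaprime}\T X$ gives an $n\times p$ matrix at cost $\calO(npk)$.
\end{itemize}
For the second term:
\begin{itemize}
\item $(S_\zeta\T X)(\nabla f_\xi(X)\T S_{\zetaprime})$ is $k\times k$ and costs $\calO(pk^2)$;
\item multiplying by $S_{\zetaprime}\T X$ gives a $k\times p$ matrix at cost $\calO(pk^2)$;
\item multiplying on the left by $S_\zeta$ gives an $n\times p$ matrix at cost $\calO(npk)$.
\end{itemize}

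\emph{Normal component.} For \eqref{eq:sketch-normal}:
\begin{itemize}
\item $(S_{\zetaprime}\T X)(S_\zeta\T X)\T$ is $k\times k$ and costs $\calO(pk^2)$;
\item multiplying by $S_\zeta\T X$ gives a $k\times p$ matrix at cost $\calO(pk^2)$;
\item multiplying on the left by $S_{\zetaprime}$ costs $\calO(npk)$, and so does the term $S_{\zetaprime}(S_{\zetaprime}\T X)$.
\end{itemize}

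\emph{Combining.} Forming $d^{\mathrm{sketch}}(X)$ from the two components needs only $\calO(np)$ flops for the scalings and sums. Since $k\le p\le n$, every term above is bounded by $\calO(npk)$: we have $nk^2\le npk$, $pk^2\le npk$ and $np\le npk$. There are a fixed number of terms, so the total is $\calO(npk)$.

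\emph{Main obstacle.} The key point is to check that no intermediate matrix is ever $n\times n$, and that no product has an inner or outer dimension $p$ paired with another $p$ and with $n$, which would cost $\calO(np^2)$. For example, $\nabla f_\xi(X)X\T$ must never be formed. The prescribed association in \eqref{eq:sketch-tangent} rules this out, so the verification amounts to carefully tracking the dimensions of each intermediate matrix, which the steps above do.
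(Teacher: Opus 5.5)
Your proposal is correct and follows the same approach as the paper. It bounds the cost of $S_\zeta\T X$, $S_{\zetaprime}\T X$ and $S_\zeta\T S_{\zetaprime}$, then evaluates \eqref{eq:sketch-tangent} and \eqref{eq:sketch-normal} in the prescribed association order, spelling out the dimension tracking that the paper calls ``straightforward but tedious'' (your extra remark that sampling costs $\calO(nk^2)\subseteq\calO(npk)$ is a harmless addition).
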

\begin{proof}
    Both products $S_\zeta^\top X$ and $S_\zetaprime^\top X$ require $\calO(npk)$ flops. The product $S_\zeta ^\top S_\zetaprime$ requires $\calO(nk^2)$ flops. Then by \eqref{eq:sketch-tangent}, it is straightforward but tedious to verify that computing the matrix products as proposed by the parentheses yields $\calO(npk)$ flops for computing $\dTsketch(X)$. Moreover, computing \eqref{eq:sketch-normal} as suggested by the parentheses also requires $\calO(npk)$ flops to obtain $\dNsketch(X)$.
\end{proof}

\subsection{Sparse sketches}
It may be argued that reducing the complexity to $\mathcal{O}(npk)$ flops per iteration is still insufficient. An even more computationally attractive sketching strategy is obtained by uniformly sampling $k$ columns of the identity matrix $I_n$ and scaling them by a factor $\sqrt{\frac{n}{k}}$.
\[
\text{(Sparse sketch)}\quad S_\zeta = \sqrt{\frac{n}{k}} 
\revision{I_{n}[1:n, \zeta]},
\]
where $\zeta$ is a list of $k$ distinct integers uniformly sampled between $1$ and $n$.

From a computational point of view, multiplying from the left by a sparse sketch is a selection and scaling of $k$ rows among $n$. Thus the product $S_\zeta^\top X$ requires only $\calO(pk)$ flops. In consequence, the matrix $S_\zeta S_\zeta^\top$ is an $n\times n$ diagonal matrix with only $k$ nonzero diagonal entries that are equal to $\frac{n}{k}$. Moreover, as shown in \cref{lem:count_sparse}, the matrix $S_\zeta^\top S_\zetaprime$ from \eqref{eq:sketch-tangent} has only $\frac{k^2}{n}$ nonzero entries in expectation. Moreover, for $n\gg k$, $$\mathbb{P}[S_\zeta^\top S_\zetaprime = 0] = \frac{(n-k)!^2}{n!(n-2k)!}\approx e^\frac{-k^2}{n}.$$ The consequence for the computation of $\dTsketch(X)$ is important since it allows further reduction of the complexity from $\calO(npk)$ to $\calO(pk^2)$. Surprisingly, the sparse sketches allow to make the factor $n$ completely disappear from the expected cost of evaluating $d^{\mathrm{sketch}}(X)$. 
 \begin{lemma}\label{lem:count_sparse}
    Computing $d^{\mathrm{sketch}}(X)$ in \cref{algo:sketchlanding} using sparse sketches requires $\calO(pk^2)$ flops in expectation.
\end{lemma}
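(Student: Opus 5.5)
We plan to go through the parenthesized expressions \eqref{eq:sketch-tangent} and \eqref{eq:sketch-normal} term by term. We represent each sparse sketch implicitly by its index list $\zeta$ and the scalar $\sqrt{n/k}$, and never form it as an $n\times k$ array. Left multiplication by $S_\zeta\T$ then amounts to selecting $k$ rows and scaling them. Hence $S_\zeta\T X$, $S_\zetaprime\T X$ and $\nabla f_\xi(X)\T S_\zetaprime = (S_\zetaprime\T\nabla f_\xi(X))\T$ each cost $\calO(pk)$ flops. Likewise, left multiplication by $S_\zetaprime$ or $S_\zeta$ of a $k\times p$ matrix only scatters $k$ scaled rows into an $n\times p$ output. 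If the output is stored as a sparse row update of $X_t$, this costs $\calO(pk)$. Once these small blocks are available, the normal component \eqref{eq:sketch-normal} reduces to two $k\times p$ by $p\times k$ or $k\times k$ by $k\times p$ products. Indeed, $(S_\zetaprime\T X)(S_\zeta\T X)\T$ is $k\times k$ and costs $\calO(pk^2)$, and multiplying it by $S_\zeta\T X$ costs $\calO(pk^2)$. The second term of \eqref{eq:sketch-tangent} is handled in the same way: $(S_\zeta\T X)(\nabla f_\xi(X)\T S_\zetaprime)$ is $k\times k$, then multiplied by the $k\times p$ block $S_\zetaprime\T X$, for $\calO(pk^2)$ flops deterministically.

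The main obstacle is the first term of \eqref{eq:sketch-tangent}, $\nabla f_\xi(X)\,[(S_\zeta\T X)\T(S_\zeta\T S_\zetaprime)]\,(S_\zetaprime\T X)$. Its outer multiplication by the dense $n\times p$ matrix $\nabla f_\xi(X)$ would cost $\calO(np^2)$ or $\calO(npk)$, and this is the only place where $n$ can enter. We plan to exploit the sparsity of $M\coloneqq S_\zeta\T S_\zetaprime$. Its $(i,j)$ entry equals $n/k$ if $\zeta_i=\zeta'_j$ and $0$ otherwise. Each index of $\zeta'$ appears at most once in $\zeta$, so $M$ has at most one nonzero per column. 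Let $m$ be the number of nonzeros, i.e.\ $m=|\zeta\cap\zeta'|$. Then $\bbE[m]=\sum_{i,j}\mathbb{P}[\zeta_i=\zeta'_j]=k^2\cdot\frac1n$, which is the $k^2/n$ claim stated before the lemma. The intersection can be found in $\calO(k)$ expected time by hashing, or in $\calO(k\log k)$ by sorting. Both costs are dominated by $pk^2$.

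We then evaluate $(S_\zeta\T X)\T M$ in $\calO(pm)$: it is a $p\times k$ matrix with only $m$ nonzero columns, each a scaled column of $(S_\zeta\T X)\T$. Multiplying by $S_\zetaprime\T X$ gives a $p\times p$ matrix $C$ of rank at most $m$, which we keep in the factored form $U V$ with $U\in\R^{p\times m}$ and $V\in\R^{m\times p}$. We then compute $\nabla f_\xi(X)\, C = (\nabla f_\xi(X) U) V$. This costs $\calO(npm + npm)=\calO(npm)$, and in expectation $\calO(np\cdot k^2/n)=\calO(pk^2)$. When $m=0$ the term vanishes and costs nothing. Combining the two components with weights $\omega_{\mathrm{T}},\omega_{\mathrm{N}}$ costs no more than forming them. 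Summing all pieces and using linearity of expectation gives the expected bound $\calO(pk^2)$. The one delicate point we still need to handle is the representation of $d^{\mathrm{sketch}}(X)$ itself. The first tangent term is genuinely dense, but its expected number of flops is still $\calO(pk^2)$. We will therefore state the result in terms of flops with $\nabla f_\xi(X)$ given, consistent with the paper's assumption.
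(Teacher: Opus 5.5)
Your proposal is correct and follows essentially the same route as the paper. The only place where $n$ enters is the first tangent term, whose cost you bound by $\calO(np\,|\zeta\cap\zeta'|)$ using the sparsity of $S_\zeta\T S_\zetaprime$; your rank-$m$ factorization $C=UV$ plays exactly the role of the paper's splitting $S_\zeta\T S_\zetaprime=(P_\mathrm{l}\sqrt{D})(\sqrt{D}P_\mathrm{r})$, and $\bbE[|\zeta\cap\zeta'|]=k^2/n$ then gives $\calO(pk^2)$ in expectation, while you additionally derive that expectation directly and account for writing out the possibly dense output, both of which the paper defers to its repository or leaves implicit.
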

\begin{proof}
    The expression of the sketched tangent component of the landing direction is given by
 \begin{align*}
\dTsketch(X) &= -\left[\nabla f_\xi(X)\left( \left( S_\zeta\T X\right)\T (S_\zeta\T S_{\zetaprime})\right)\right] \left(S_{\zetaprime}\T X\right) \\
&+ S_\zeta \left[ \left(\left(S_\zeta\T X\right) \left(\nabla f_\xi(X)\T S_{\zetaprime}\right)\right) 
\left(S_{\zetaprime}\T X\right)\right].
 \end{align*}
 For the sparse sketches, computing  $S_\zeta^\top X$ and $S_\zetaprime^\top X$ reduces to $\calO(pk)$ flops. The $k\times k$ matrix $S_\zeta^\top S_\zetaprime$ is sparse with $|\zeta\cap\zetaprime|$ nonzero entries, and at most one nonzero entry per row and per column. Moreover,  $S_\zeta\T S_{\zetaprime}=0$ with probability $\frac{(n-k)!^2}{n!(n-2k)!}$, in which case the computation reduces to 
 \begin{equation*}
\dTsketch(X) =  S_\zeta \left[ \left(\left(S_\zeta\T X\right) \left(\nabla f_\xi(X)\T S_{\zetaprime}\right)\right) 
\left(S_{\zetaprime}\T X\right)\right].
 \end{equation*}
 In this case, it is simple to verify that $\dTsketch(X)$ is computed with $\mathcal{O}(pk^2)$ flops using the association suggested by the parenthesis. We recall that $X$ and $\nabla f_\xi(X)$ are $n\times p$ matrices. 
 
Otherwise, it is verified in \url{https://github.com/flgoyens/SketchedLanding} that
    \[
    \mathbb{E}_{\zeta,\zetaprime}[|\zeta\cap\zetaprime|] = \frac{k^2}{n}\  \text{and}\ S_\zeta^\top S_\zetaprime = P_\mathrm{l}DP_\mathrm{r},
    \]
    where $D$ is diagonal with $|\zeta\cap\zetaprime|$ positive diagonal entries and $P_\mathrm{l},P_\mathrm{r}$ are permutation matrices. Then, it follows that
 \begin{align*}
     &\left[\nabla f_\xi(X)\left( \left( S_\zeta\T X\right)\T (S_\zeta\T S_{\zetaprime})\right)\right] \left(S_{\zetaprime}\T X\right) \\
     &= \left[\nabla f_\xi(X)\left( \left( S_\zeta\T X\right)\T ( P_\mathrm{l}\sqrt{D})\right)\right] \left(\sqrt{D}P_\mathrm{r}\left(S_{\zetaprime}\T X\right)\right).
 \end{align*}
Since $D$ has $\frac{k^2}{n}$ nonzero diagonal entries in expectation, the product $\left( S_\zeta\T X\right)\T ( P_\mathrm{l}\sqrt{D})$ has $\frac{k^2}{n}$ nonzero columns in expectation. The same holds for $\sqrt{D}P_\mathrm{r}\left(S_{\zetaprime}\T X\right)$ with the rows. The matrix $\nabla f_\xi(X)$ is $n\times p$ and the product $\nabla f_\xi(X)\left( \left( S_\zeta\T X\right)\T ( P_\mathrm{l}\sqrt{D})\right)$ costs $\mathcal{O}(np\frac{k^2}{n}) = \mathcal{O}(pk^2)$ flops in expectation. Finally the multiplication by the right by $\sqrt{D}P_\mathrm{r}\left(S_{\zetaprime}\T X\right)$ also costs $\mathcal{O}(pk^2)$ flops in expectation.

For the sketched normal component of the landing term, we have
\begin{align*}
    d_{\mathrm{N}}^{\mathrm{sketch}}(X) &= -2 S_\zetaprime \left[\left((S_\zetaprime\T X)  (S_\zeta\T X)\T\right) (S_\zeta^\top X)\right] \\
    &+ 2S_\zetaprime (S_\zetaprime\T X).
\end{align*}
The complexity of $\mathcal{O}(pk^2)$ flops follows directly. We have thus shown that computing the landing direction from $X, \nabla f_\xi(X), S_\zeta$ and $S_\zetaprime$ costs $\mathcal{O}(pk^2)$ flops in expectation.

\end{proof}
\begin{table}
\centering
\caption{Computational cost of computing the tangent and normal components of the landing direction, without the cost of evaluating $\nabla f(X)$ or $\nabla f_\xi(X)$. }
{\normalsize
\begin{tabular}{|c|c|}
\hline
Formula & Expected flops \\
  \hline
  \hline
 $d(X)$ & $\calO(n p^2)$ \\
  \hline
 $d^\mathrm{sketch}(X)$ (dense sketch) & $\calO(npk)$ \\
\hline
  $d^\mathrm{sketch}(X)$ (sparse sketch) & $\calO(pk^2)$ \\
\hline
\end{tabular}}

\end{table}

\section{Variance reduction}\label{sec:svrg}
To reduce the variance introduced by random sketches, we propose a variance reduction technique, inspired by the SVRG framework used for finite-sum minimization~\cite{johnson2013accelerating}. For some parameter $m>0$, every $m$ iteration, we compute an \emph{unsketched} landing step~\eqref{eq:unsketched_landing}. This unsketched step is used to improve the directions during the next $m$ iterations, after which a new unsketched step is computed (\cref{algo:svrg_sketch_landing}).

\begin{algorithm}
\caption{SVRG sketched landing}
\begin{algorithmic}[1]
\State \textbf{Given:} $X_0\in \Rnpstar$, step sizes $\{\alpha_t >0\}_{t=0}^T$, $\omega_\mathrm{T},\omega_\mathrm{N}>0$, $m \in \mathbb{N}_*$ and sketch size $k$.
\State \textbf{For} $t=0,1,\dots,T$
\State $Y = X_t$ if $t$ is a multiple of $m$
\State Generate sketches $S_\zeta,S_\zetaprime \in \Rnk$ 
\State $d_{\mathrm{T}}^{\mathrm{svrg}}(X_t) = d^{\mathrm{landing}}_\mathrm{T}(Y) - d^{\mathrm{sketch}}_\mathrm{T}(Y) + d^{\mathrm{sketch}}_\mathrm{T}(X_t)$
 \State $d_{\mathrm{N}}^{\mathrm{svrg}}(X_t) = d^{\mathrm{landing}}_\mathrm{N}(Y) - d^{\mathrm{sketch}}_\mathrm{N}(Y) + d^{\mathrm{sketch}}_\mathrm{N}(X_t)$ 
\State $X_{t+1} = X_t + \alpha_t \left(\omega_{\mathrm{T}} d_{\mathrm{T}}^{\mathrm{svrg}}(X_i) + \omega_{\mathrm{N}} d_{\mathrm{N}}^{\mathrm{svrg}}(X_t) \right)$
\State \textbf{End for}
\end{algorithmic}
\label{algo:svrg_sketch_landing}
\end{algorithm}

The SVRG formula features the difference of constrained Riemannian gradients computed at different points. This is well defined since those tangent vectors belong to the ambient space $\Rnp$. An expensive alternative would be to transport the tangent vectors to the tangent space at the current point $X_t$.

\section{Convergence analysis}\label{sec:convergence}
We show, under common assumptions, that \cref{algo:sketchlanding,algo:svrg_sketch_landing} converge in expectation towards a critical point of~\eqref{eq:P} using a sequence of decreasing step sizes.

In~\cite{vary2024Optimizationa}, the authors show convergence in expectation for the stochastic landing iteration~\eqref{eq:landing_vary} for optimization on the generalized Stiefel manifold. The sketched landing is an instance of this iteration, for the particular case where $B$ is the identity and the unbiased estimators $B_{\zeta}$ are computed from sketch matrices. 

Convergence proofs for landing methods rely on the existence of a \emph{safe region}, which contains all iterates by assumption. The safe region~\cite[page 4]{goyens2024computing} is defined for some constant $r\in(0,1)$ as
\begin{equation}
    \mathcal{M}^r= \left\{ X\in \Rnp: \norm{h(X)}_\mathrm{F}\leq r\right\},
\end{equation}
and satisfies
\begin{equation}
    \sigmamin(\D h(X)) \geq 2 \sqrt{1-r}>0, \quad \text{for all } X\in  \mathcal{M}^r.
\end{equation}
The safe region is a strict subset of $\Rnpstar$, where the landing method is well defined. 

The following convergence result is consistent with rates for stochastic gradient methods on nonconvex problems. For an expanded statement and formal proof, see~\url{https://github.com/flgoyens/SketchedLanding}.
\begin{theorem}
    Consider the sketched landing~\cref{algo:sketchlanding} and the SVRG version~\cref{algo:svrg_sketch_landing} with dense or sparse sketches as defined above, and step sizes satisfying $\alpha_t = \alpha_0/\sqrt{t+1}$. Further assume that each $f_\xi$ is $L$-smooth and stochastic gradients $\nabla f_\xi(x)$ are unbiased estimates of $\nabla f(x)$ with bounded variance. Provided that the segment connecting all iterates remains in the safe region $\mathcal{M}^r$ with probability one, then the iterates of~\cref{algo:sketchlanding,algo:svrg_sketch_landing}  satisfy
    \begin{equation*}
        \inf_{t\leq T} \mathbb{E}\left[ \norm{\grad f(X_t)}_\mathrm{F}^2 \right ] \leq \tilde\calO\left( \dfrac{1}{\sqrt{T}}\right)
    \end{equation*}
    and 
     \begin{equation*}
        \inf_{t\leq T} \mathbb{E}\left[ \norm{h(X_t)}_\mathrm{F}^2 \right ] \leq \tilde\calO\left( \dfrac{1}{\sqrt{T}}\right)
    \end{equation*}
    for $\alpha_0$ small enough.
\end{theorem}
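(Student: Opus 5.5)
The plan is to reduce the statement to the convergence theorem for the stochastic landing iteration~\eqref{eq:landing_vary} of~\cite{vary2024Optimizationa} in the special case $B=I_n$, $B_\zeta=S_\zeta S_\zeta\T$. For that we need to verify its hypotheses for our sketch distributions. Then we need a separate, analogous argument for the SVRG variant. The assumptions of~\cite{vary2024Optimizationa} are:
\begin{itemize}
\item unbiasedness $\bbE[B_\zeta]=B$, which is \cref{prop:expectation_sketch} for dense sketches, and for sparse sketches follows because each index lies in $\zeta$ with probability $k/n$, so $\bbE[S_\zeta S_\zeta\T]=\tfrac{n}{k}\cdot\tfrac{k}{n}I_n$;
\item independence of $\zeta,\zeta',\xi$, which holds by construction;
\item bounded second moments of $B_\zeta$;
\item $L$-smoothness and bounded variance of $\nabla f_\xi$, which are assumed;
\item iterates in a bounded safe region, which is assumed.
\end{itemize}
The second-moment bound is immediate: $\norm{B_\zeta}_2=n/k$ deterministically for both sketch types, since $B_\zeta$ is $\tfrac nk$ times an orthogonal projector. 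Hence $\bbE\norm{B_\zeta}^2\le n^2/k^2$, and all the constants below depend polynomially on $n/k$.

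Next, I will recall the skeleton of the argument to make the constants explicit. I will use the merit function $\mathcal{L}(X)=f(X)+\lambda\mathcal{N}(X)$ on $\mathcal{M}^r$ for a suitable $\lambda>0$. The first step is unbiasedness of the direction. Using the independence of $\zeta,\zeta',\xi$ and multilinearity, $\bbE[\dTsketch(X)]=-\grad f(X)$ and $\bbE[\dNsketch(X)]=-\nabla\mathcal{N}(X)$, because each expression is linear in each of $B_\zeta$, $B_{\zeta'}$ and $\nabla f_\xi$ separately. The second step is a second-moment bound $\bbE\fronorm{d^{\mathrm{sketch}}(X)}^2\le C$ uniformly on $\mathcal{M}^r$. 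This uses boundedness of $\mathcal{M}^r$ (where $\norm{X}_2\le\sqrt{1+r}$), smoothness, bounded variance, and the bound on $\norm{B_\zeta}$. The third step is a descent inequality for the expected step. Since $\grad f(X)\in\rmT_X$ and $\nabla\mathcal{N}(X)\in\rmN_X$ are orthogonal, and using $\sigmamin(\D h(X))\ge2\sqrt{1-r}$, we get
\[
\langle\nabla\mathcal{L}(X),\omega_\mathrm{T}\grad f(X)+\omega_\mathrm{N}\nabla\mathcal{N}(X)\rangle\ \ge\ c_1\fronorm{\grad f(X)}^2+c_2\fronorm{h(X)}^2
\]
for $\lambda$ large enough. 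The cross term $\langle\nabla f,\nabla\mathcal{N}\rangle$ is controlled by $\norm{\nabla f}$ bounded on $\mathcal{M}^r$ and Young's inequality. Because the segments stay in $\mathcal{M}^r$, the $L_\mathcal{L}$-smoothness of $\mathcal{L}$ there gives
\[
\bbE\mathcal{L}(X_{t+1})\le\bbE\mathcal{L}(X_t)-\alpha_t\big(c_1\bbE\fronorm{\grad f(X_t)}^2+c_2\bbE\fronorm{h(X_t)}^2\big)+\tfrac{L_\mathcal{L}C}{2}\alpha_t^2.
\]
Telescoping, using that $\mathcal{L}$ is bounded below on $\mathcal{M}^r$, with $\sum_{t\le T}\alpha_t\sim\alpha_0\sqrt T$ and $\sum\alpha_t^2\sim\alpha_0^2\log T$, gives both $\tilde\calO(1/\sqrt T)$ bounds for the infimum. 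The requirement that $\alpha_0$ be small enough enters through keeping the iterates in $\mathcal{M}^r$ and the constant choices.

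For \cref{algo:svrg_sketch_landing} the same template applies. Conditionally on the anchor $Y$, $\bbE[d^{\mathrm{svrg}}(X_t)]=d^{\mathrm{landing}}(X_t)$, since the two sketched terms at $Y$ have mean $d^{\mathrm{landing}}(Y)$ and cancel with the exact term. One must be careful that the same $\zeta,\zeta'$ are used at $Y$ and $X_t$, which only matters for the variance. The second moment is bounded by $3\big(\fronorm{d^{\mathrm{landing}}(Y)}^2+\bbE\fronorm{d^{\mathrm{sketch}}(Y)}^2+\bbE\fronorm{d^{\mathrm{sketch}}(X_t)}^2\big)\le C'$ on $\mathcal{M}^r$, and the rest of the argument is unchanged. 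I will not attempt to exploit the variance reduction itself, since it is not needed for the stated rate.

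The main obstacle is the third step: obtaining the merit-function descent inequality with constants uniform on $\mathcal{M}^r$. This includes choosing $\lambda$ so that the cross term between $\nabla f$ and the normal direction is absorbed, and it must be done while only the expectation, not each realization, of the sketched direction is tangent/normal. I would handle this by doing all the inner-product estimates on the expected direction, which equals the exact landing direction, and pushing all randomness into the $\alpha_t^2$ second-moment term. This is exactly the structure of~\cite{vary2024Optimizationa}, so I would cite their lemma with $B=I_n$ where possible.
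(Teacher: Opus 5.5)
Your reduction matches the paper's proof. The paper only invokes \cite[Thm.~2.9]{vary2024Optimizationa} after noting that the directions of \cref{algo:sketchlanding,algo:svrg_sketch_landing} are unbiased estimates of the landing field with bounded variance. Your checks are correct: unbiasedness (including the sparse case), independence, $\norm{B_\zeta}_2=n/k$, and conditional unbiasedness of the SVRG direction.

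The gap is in your self-contained skeleton, at the step you call the main obstacle. With $\mathcal L=f+\lambda\mathcal N$, the inequality $\langle\nabla\mathcal L,\omega_\mathrm{T}\grad f+\omega_\mathrm{N}\nabla\mathcal N\rangle\ge c_1\fronorm{\grad f}^2+c_2\fronorm{h}^2$ is false for every $\lambda$. The cross term equals $\omega_\mathrm{N}\langle\nabla f(X),\nabla\mathcal N(X)\rangle=2\omega_\mathrm{N}\langle\mathrm{sym}(X\T\nabla f(X)),h(X)\rangle$. It is first order in $h$, and its coefficient is a Lagrange-multiplier estimate that does not vanish at critical points of \eqref{eq:P}. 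By contrast, $\lambda\omega_\mathrm{N}\fronorm{\nabla\mathcal N}^2$ is only second order in $h$. Young's inequality therefore leaves a residual of order $\fronorm{\nabla f(X)}^2/\lambda$, involving the full gradient rather than $\grad f$. For fixed $\lambda$ this residual does not decay after telescoping, so the $\tilde\calO(1/\sqrt T)$ rate is lost.

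A minimal example: take $n=p=1$ and $f(x)=ax$ with $a\neq0$. Then $\grad f\equiv0$ and $\langle\nabla\mathcal L(x),\omega_\mathrm{N}\nabla\mathcal N(x)\rangle=2\omega_\mathrm{N}xh(x)\,(a+2\lambda xh(x))$. This is negative for $x$ near $1$ with $h(x)$ small and of sign opposite to $a$. Conceptually, $f+\lambda\mathcal N$ is an inexact penalty whose stationary points lie off $\Stnp$. It therefore cannot be a Lyapunov function for a field that vanishes exactly at critical points of \eqref{eq:P}.

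The missing ingredient is the Fletcher-type merit function that landing analyses rely on:
$\mathcal L(X)=f(X)-\tfrac12\langle\mathrm{sym}(X\T\nabla f(X)),X\T X-I_p\rangle+\mu\mathcal N(X)$.
\begin{itemize}
\item Its gradient contains the term $-X\,\mathrm{sym}(X\T\nabla f(X))$, which is orthogonal to $\grad f(X)$ (a symmetric matrix paired with a skew one).
\item Since $X\T X=I_p+h(X)$, this term cancels the linear cross term up to $\calO(\fronorm{h(X)}^2)$.
\item The remaining contributions are $\calO(\fronorm{h}\fronorm{\grad f})$ and $\calO(\fronorm{h}^2)$, which Young's inequality and a large $\mu$ absorb.
\end{itemize}
Because this $\mathcal L$ contains $\nabla f$, its smoothness on $\mathcal M^r$ needs control of second derivatives of $f$. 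That is part of what you import from the cited theorem, not a consequence of $L$-smoothness alone.

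With this replacement, the rest of your template goes through: descent computed on the expected direction, the uniform second-moment bound, and telescoping with $\alpha_t=\alpha_0/\sqrt{t+1}$, including your treatment of the SVRG variant. Alternatively, stop after your unbiasedness and second-moment checks and cite \cite[Thm.~2.9]{vary2024Optimizationa}, as the paper does.
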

\begin{proof}
    The result follows from~\cite[Thm. 2.9]{vary2024Optimizationa}, since the directions of~\cref{algo:sketchlanding,algo:svrg_sketch_landing} are unbiased estimates of the landing field~\eqref{eq:landing_stiefel} with bounded variance. 
\end{proof}

\section{Numerical results}\label{sec:numerics}

\cref{fig:2d} illustrates the sketched landing dynamics for the minimization of a quadratic function on the unit circle. The sketched directions provide a noisy approximation of the deterministic trajectory.

\begin{figure}[h]
 
 \hspace{-0.5cm}
  \includegraphics[width=0.55\textwidth]{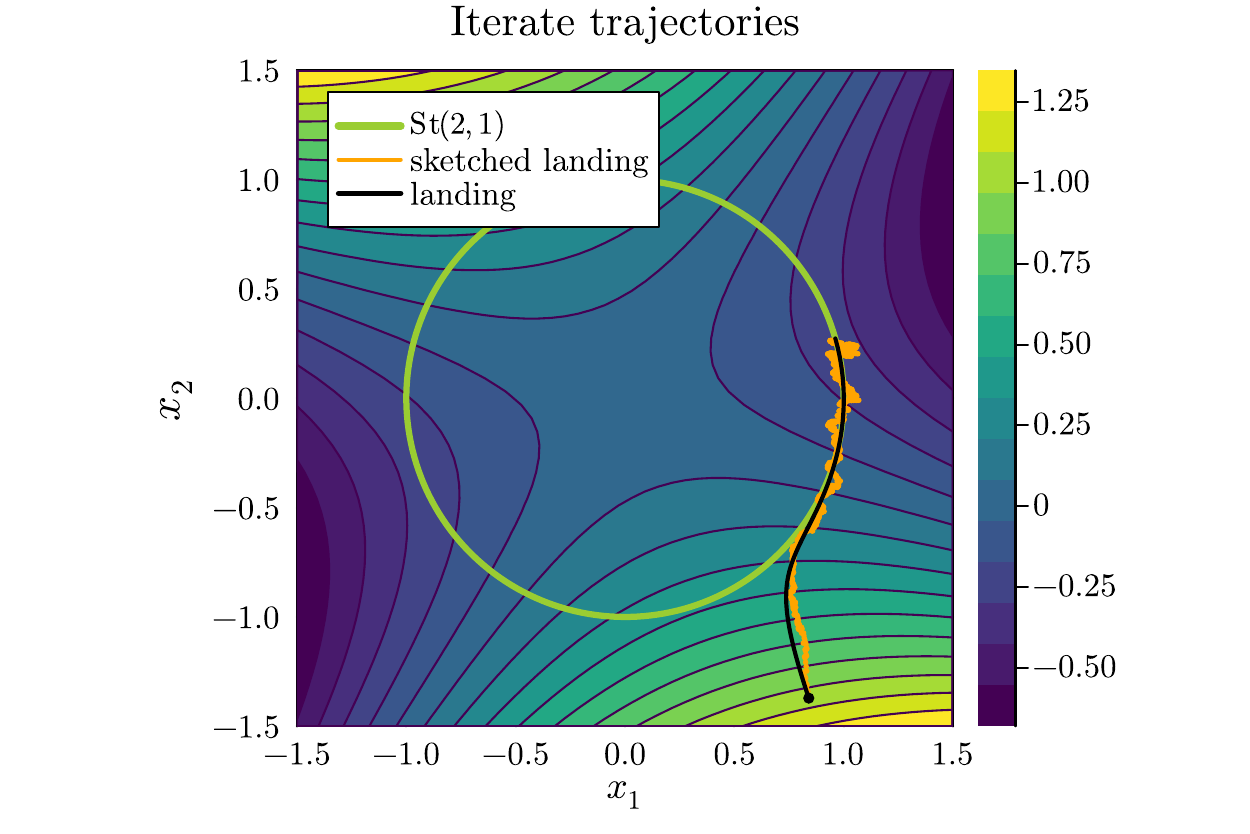}
  
\caption{Iterates of the landing and sketched landing from \cref{algo:sketchlanding} in $\R^2$.}

\label{fig:2d}
\end{figure} 

We also report preliminary numerical results in moderate dimensions. The sketched landing framework is intended to yield significant computational gains in high-dimensional regimes, which are beyond the scope of the present manuscript. 

First note that the tuning and adaptivity of step sizes is a critical issue for the performance of landing methods, which we do not investigate here: all experiments are done with constant step sizes. Preliminary experiments indicate that deterministic directions may allow to take larger step sizes than sketched directions and still maintain convergence.

\begin{figure}[!htb]
\centering  
  \includegraphics[width=0.5\textwidth]{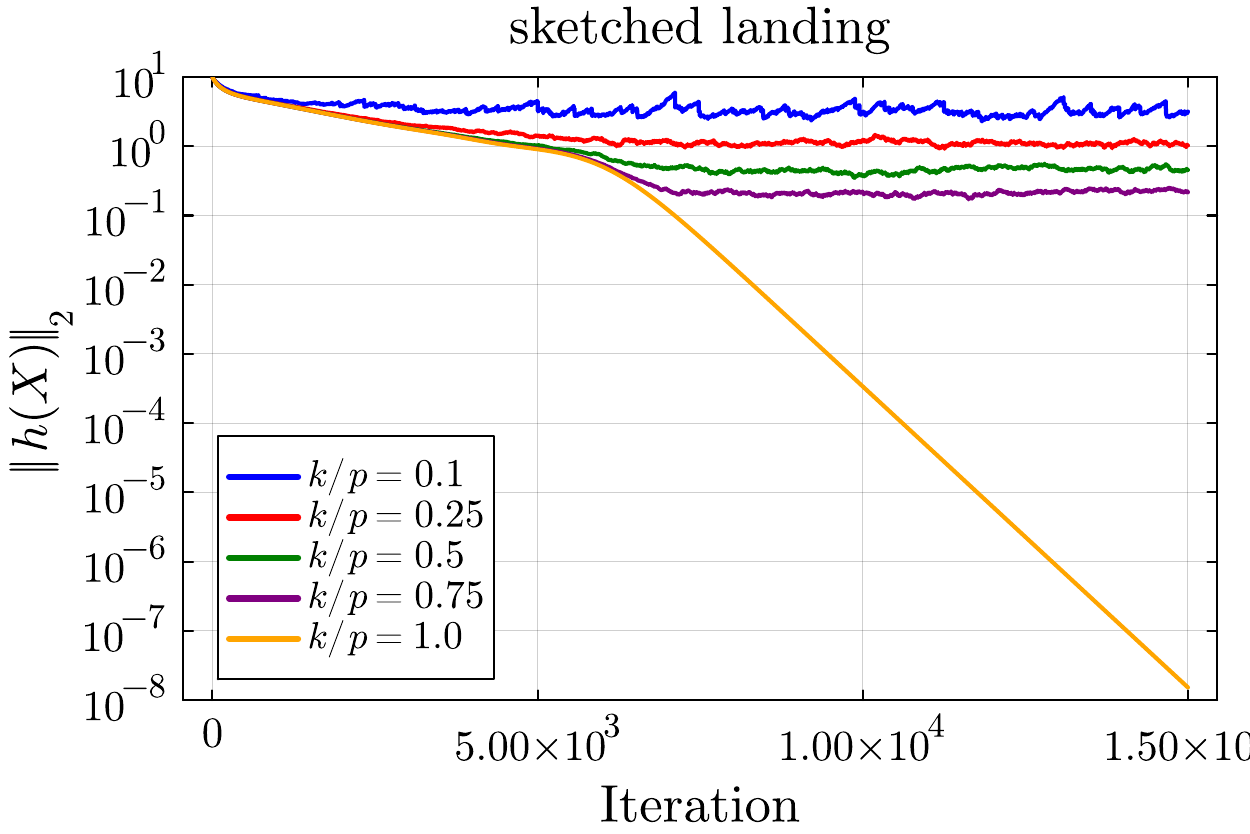}

\caption{Orthogonalization ($f=0)$, $n=p=100$, $\alpha = 10^{-3}$.}
\label{fig:plain}
\end{figure}

\begin{figure}[!htb]
\centering  
  \includegraphics[width=0.5\textwidth]{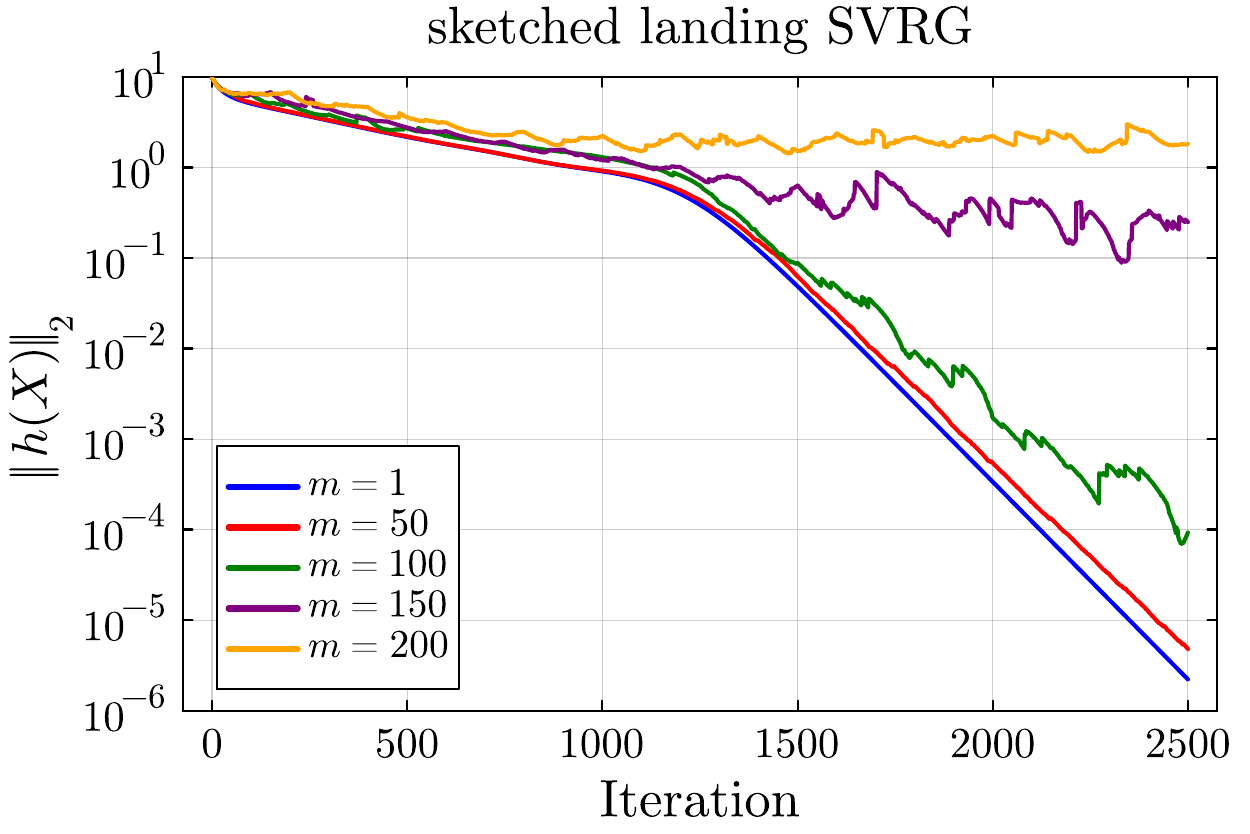}

\caption{Orthogonalization ($f=0)$: sketched landing SVRG with $\alpha = 5\cdot 10^{-3}$, $n=p=100$, $k=10$.}
\label{fig:svrgT}
\end{figure} 

In the experiments of \cref{fig:plain}, we start by considering the problem of orthogonalizing a matrix (i.e., $f=0$). The plot shows that the sketched landing without variance reduction tends to plateau around a noise level that depends on the sketch dimension $k$; whereas \cref{fig:svrgT} shows that the SVRG version can converge to high accuracy, even for small sketch dimension $k$. The SVRG method behaves essentially like the deterministic landing for appropriate values of $m$; and performance can degrade if $m$ becomes too large.

\begin{figure}[h!]
\centering  

 \includegraphics[width=0.5\textwidth]{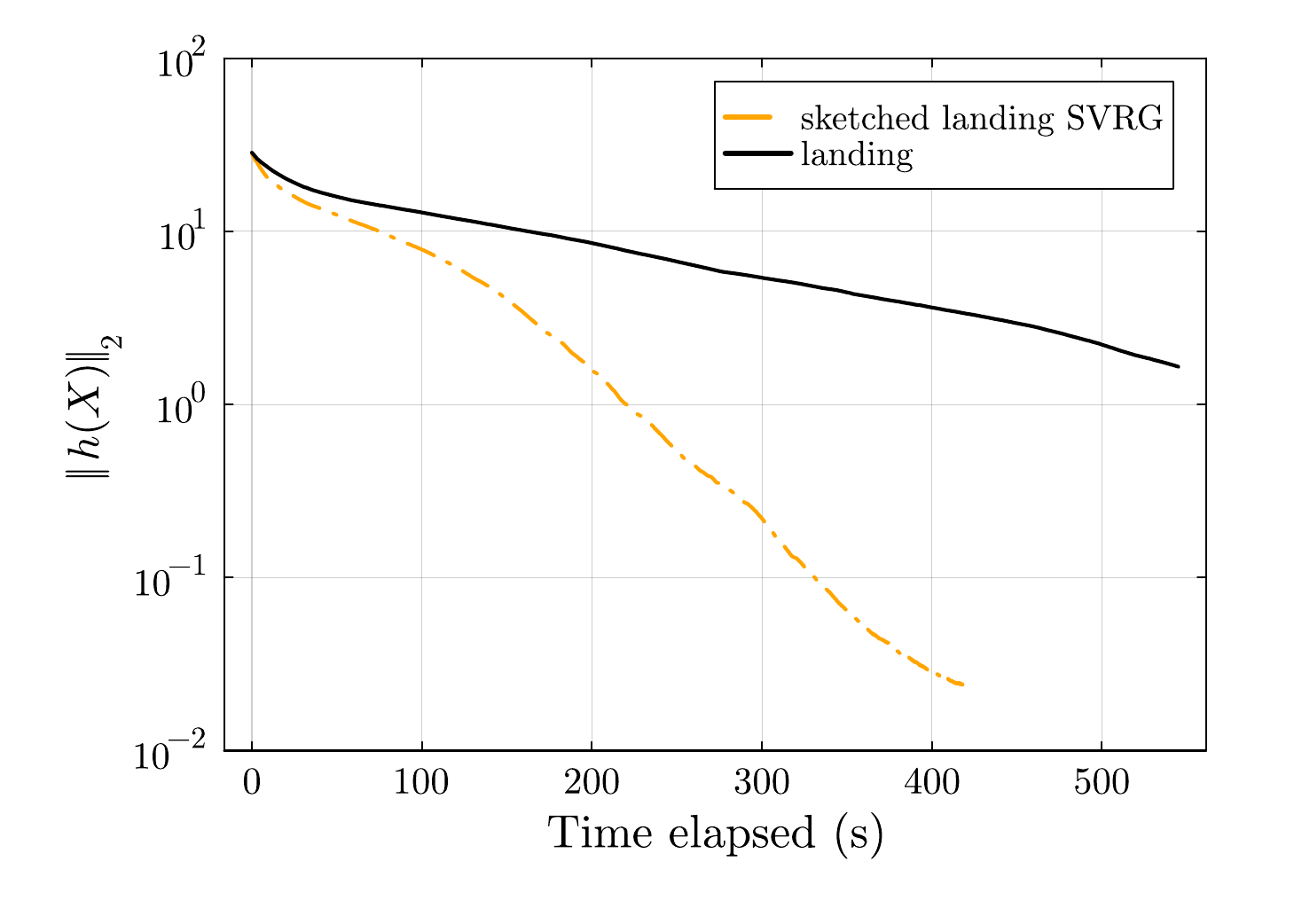}

\includegraphics[width=0.5\textwidth]{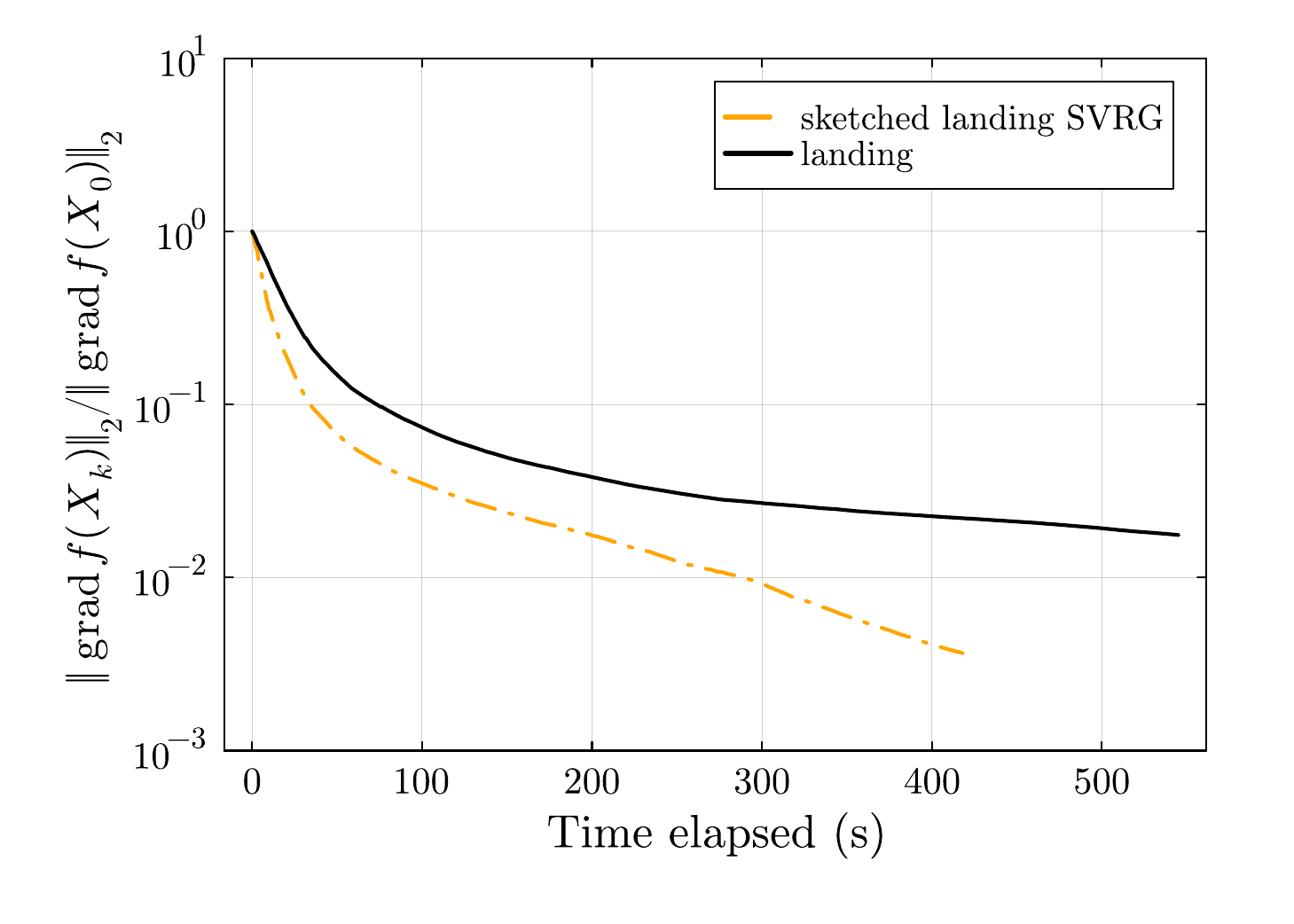}

\caption{Rayleigh quotient: sketched landing SVRG vs landing with $\alpha = 10^{-3}$, $n=1000$, $p=900$, $k=50$, and $m=20$.}

\label{fig:comparison}
\end{figure} 

Finally,~\cref{fig:comparison} shows that the sketched landing performs better than the deterministic landing in a case where both methods use the same step size to optimize the Rayleigh quotient $\mathrm{tr}\left(X \T A X\right)$ where $A$ is sparse and symmetric. 

\section{Conclusion}
We have introduced the sketched landing method, a randomized variant of the landing framework for optimization under orthogonality constraints. The method reduces the $O(np^2)$ computational complexity of the matrix products in the tangent and normal components of the landing field by using low-dimensional random sketches. Dense Gaussian sketches reduce the per-iteration cost to $O(npk)$, while sparse subsampling sketches reduce it further to $O(pk^2)$ in expectation. An SVRG-based variance-reduction scheme mitigates the noise floor introduced by sketching. Future work includes a high-performance implementation of the sketched landing algorithm, as well as numerical experiments on large-scale problems.







\bibliographystyle{IEEEtran}
\bibliography{mynicenewbib}
\end{document}